\theoremstyle{plain}
\newtheorem{theorem}                {Theorem}      [section]
\newtheorem*{theorem*}                {Theorem}
\newtheorem{proposition}  [theorem]  {Proposition}
\newtheorem{corollary}    [theorem]  {Corollary}
\newtheorem{lemma}        [theorem]  {Lemma}
\theoremstyle{definition}
\newtheorem{remark}       [theorem]  {Remark}
\newtheorem{definition}   [theorem]  {Definition}
\DeclareMathOperator{\trace}{trace} 
\DeclareMathOperator{\Div}{div}
\DeclareMathOperator{\cst}{constant}
\DeclareMathOperator{\sol}{Sol_3}
\numberwithin{equation}{section}
\begin{document}

\title[Biconservative surfaces]{The rigidity of biconservative surfaces in $\sol$}

\author{Dorel~Fetcu}

\thanks{}

\address{Department of Mathematics and Informatics\\
Gh. Asachi Technical University of Iasi\\
Bd. Carol I, 11A \\
700506 Iasi, Romania} \email{dorel.fetcu@academic.tuiasi.ro}

\subjclass[2020]{53C40, 31B30, 53C42}

\keywords{Biconservative Surfaces, Biharmonic Surfaces, Thurston Geometries}

\begin{abstract} We consider biconservative surfaces in $\sol$, find their local equations, and then show that all biharmonic surfaces in this space are minimal. 
\end{abstract}

\maketitle

\section{Introduction}

Almost forty years ago, B.-Y. Chen \cite{43} introduced biharmonic submanifolds of Euclidean spaces $\mathbb{E}^{n}$, as isometric immersions with harmonic mean curvature vector field.  Then, in \cite{Ishikawa, J2} it was proved that biharmonic surfaces in $\mathbb{E}^{3}$ are minimal, a result that led to the still open Chen's Conjecture that biharmonic submanifolds of Euclidean spaces are minimal \cite{43}. 

In the same era, independently and in a more abstract way, biharmonic maps were defined by G.-Y.~Jiang \cite{Jiang} as critical points of the $L^{2}-$norm of the tension field. This type of a variational problem was suggested back in the 1964 by J.~Eells and J.~H.~Sampson in their seminal paper \cite{ES}. We note that the two definitions agree in Euclidean spaces $\mathbb{E}^n$.

Since ambient spaces with non-positive curvatures do not allow for interesting compact examples, most of the research has been done on biharmonic submanifolds of Euclidean spheres and other spaces with convenient curvature properties (for detailed accounts see \cite{FO, Chen-Ou}). 

The notion of biconservative submanifolds was derived from the theory of biharmonic submanifolds by only requiring the vanishing of the tangent part of the bitension field. Although a rather new one, this subject is already well established. If we are to illustrate only the literature on biconservative surfaces, we refer to papers like \cite{CMOP, FNO, Hasanis, LMO, MOR-Euclidean, MOR, MOP, Nistor1} to gain a satisfactory (if fairly incomplete) imagine. 

Studying the geometry of surfaces in $\sol$ seems to be the most challenging among the eight Thurston geometries. The lack of some powerful tools used in other homogeneous $3$-manifolds to describe the geometry of immersed surfaces lead to many difficulties that do not appear in the case of the remaining seven geometries (see \cite{DM}). However, in spite of these difficulties, there are important results on the existence and uniqueness of constant mean curvature (CMC) spheres \cite{DM,M,MMPR}, on totally umbilic surfaces \cite{ST}, half-space theorems for minimal surfaces \cite{DMR}, as well as Jenkins-Serrin type results for minimal \cite{N} and CMC surfaces \cite{KM}. In a local approach, one classified constant angle surfaces \cite{LM}. Also CMC biharmonic surfaces were studied in \cite{OW} and it turned out that in $\sol$ they are actually minimal. 

In our paper, we first prove that CMC biconservative surfaces in $\sol$ are minimal. Next, we show that non-CMC biconservative surfaces are constant angle surfaces and find their local equations. We conclude with the fact that all biharmonic surfaces in $\sol$ are minimal. This last result is similar to that for biharmonic surfaces in $\mathbb{E}^3$ (see \cite{43,D}). 

\textbf{Conventions.}
Throughout the paper surfaces are oriented, we will use the following sign conventions
$$
R(X,Y)Z=\nabla_{X}\nabla_{Y}Z-\nabla_{Y}\nabla_{X}Z-\nabla_{[X,Y]}Z,\quad
\Delta=\trace\nabla^{2}=\trace(\nabla\nabla-\nabla_{\nabla}),
$$
and objects on $\sol$ will be indicated by $\overline{(\cdot)}$.

\section{Preliminaries}

Biharmonic maps $\phi : M^{m} \to N^{n}$ between two Riemannian manifolds are critical points of the bienergy functional
$$
E_{2}:C^{\infty}(M,N)\to \mathbb{R}, \quad E_{2}(\phi)=\frac{1}{2}\int_{M} |\tau(\phi)|^{2} dv,
$$
where $\tau(\phi)= \trace  \nabla d \phi$ is the tension field of $\phi$. The Euler-Lagrange equation, also called the biharmonic equation, was derived by G.-Y.~Jiang \cite{Jiang}
\begin{eqnarray}\label{tau-2}
\tau_{2}(\phi)=\Delta \tau(\phi)-\trace R^N(d\phi(\cdot),\tau(\phi))d \phi (\cdot)=0,
\end{eqnarray}
where $\tau_{2}(\phi)$ is the bitension field of $\phi$.

Since any harmonic map is biharmonic, the important case, from the point of view of biharmonicity, is that of non-harmonic biharmonic ones.

Next, if we consider a fixed map $\phi$ and let the domain metric vary, one obtains a functional on the set $\mathcal{G}$ of
Riemannian metrics on $M$
$$
\mathcal{F}_{2}:\mathcal{G}\to \mathbb{R}, \quad \mathcal{F}_{2}(g)=E_{2}(\phi).
$$
Critical points of this functional are characterized by the vanishing of the stress-energy
tensor $S_{2}$ of the bienergy (see \cite{LMO}). This tensor was introduced
in \cite{Jiang87} as
\begin{eqnarray*}
S_{2}(X,Y)&=&\frac{1}{2}\vert \tau (\phi)\vert ^{2}\langle X,Y \rangle +\langle d \phi, \nabla \tau (\phi) \rangle \langle X, Y \rangle-\langle d\phi (X),\nabla_{Y} \tau (\phi)\rangle
\\
&\ & -\langle d\phi (Y),\nabla_{X} \tau (\phi)\rangle,
\end{eqnarray*}
and it satisfies
$$
\Div S_{2}=-\langle \tau_{2}(\phi), d\phi\rangle.
$$

For isometric immersions, $(\Div S_{2})^{\sharp} =-\tau_{2}(\phi)^{\top}$, where $\tau_{2}(\phi)^{\top}$ is the
tangent part of the bitension field.

\begin{definition}
A submanifold $\phi:M^{m} \to N^{n}$ of a Riemannian manifold $N^{n}$ is called biconservative if $\Div S_{2}=0$.
\end{definition}

From the definition, it is easy to see that a submanifold is biconservative if and only if the tangent part of the bitension field vanishes.

Now, let us consider a surface $\Sigma^2$ in a Riemannian manifold $N^n$, with the unit normal vector field $\xi$. The Gauss and the Weingarten equations of the surface
$$
\nabla^N_XY=\nabla_XY+\sigma(X,Y)\quad\textnormal{and}\quad \nabla^N_X\xi=-AX,
$$
hold for all vector fields $X$ and $Y$ tangent to the surface, where $\nabla$ is the induced connection on $\Sigma^2$, $\sigma$ is its second fundamental form, and $A$ its shape operator. The mean curvature vector field of $\Sigma^2$ is given by $H=f\xi$, where $f=(1/2)\trace A$ is the mean curvature function. 

If $f$ is constant, then $\Sigma^2$ is called a constant mean curvature (CMC) surface.

The Codazzi equation of the surface reads, for any vector fields $X$, $Y$, $Z$ tangent to $\Sigma^2$, and any normal vector field $V$,
\begin{equation}\label{eq:Codazzi}
\begin{array}{cl}
\langle \bar R(X,Y)Z,V\rangle=&\langle(\nabla^{\perp}_X\sigma)(Y,Z),V\rangle-\langle(\nabla^{\perp}_Y\sigma)(X,Z),V\rangle,
\end{array}
\end{equation}
where $(\nabla^{\perp}_X\sigma)(Y,Z)=\nabla^{\perp}_X\sigma(Y,Z)-\sigma(\nabla_XY,Z)-\sigma(Y,\nabla_XZ)$.

The Gauss equation of the surface is 
\begin{equation}\label{K}
\langle R(X,Y)Z,W\rangle=\langle\bar R(X,Y)Z,W\rangle+\langle AY,Z\rangle\langle AX,W\rangle-\langle AX,Z\rangle\langle AY,W\rangle,
\end{equation}
for all tangent vector fields $X$, $Y$, $Z$, and $W$, where $R$ is the curvature tensor of $\Sigma^2$.

Specializing a general result in \cite{LMO} in the case of surfaces, we have the following splitting of biharmonic equation theorem.

\begin{theorem}[\cite{LMO}]\label{decomposition}
A surface $\Sigma^2$ in a Riemannian manifold $N^n$ is biharmonic if and only if
$$
\begin{cases}
\Delta f-f|A|^2-f\langle\trace(R^N(\cdot,\xi)\cdot),\xi\rangle=0\\
A(\nabla f)+f\nabla f+f\trace(R^N(\cdot,\xi)\cdot)^{\top}=0.
\end{cases}
$$ 
\end{theorem}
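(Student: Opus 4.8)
The plan is to obtain both equations at once by computing the bitension field $\tau_2(\phi)$ of the isometric immersion $\phi\colon\Sigma^2\to N^n$ from \eqref{tau-2} and separating it into the parts normal and tangent to $\Sigma^2$; since $\tau_2(\phi)=0$ is equivalent to the simultaneous vanishing of these two parts, this produces the system. The starting point is that, $\Sigma^2$ being a surface, $\tau(\phi)=\trace\sigma=(\trace A)\,\xi=2f\xi$, so the computation reduces to evaluating the rough Laplacian $\Delta(f\xi)$ in the pullback bundle and the curvature term $\trace R^N(d\phi(\cdot),\tau(\phi))d\phi(\cdot)$.

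For $\Delta(f\xi)$ I would work at an arbitrary point $p\in\Sigma^2$ with a local orthonormal frame $\{e_1,e_2\}$ geodesic at $p$, and use the Weingarten equation $\nabla^N_X\xi=-AX$ and the Gauss formula to get, at $p$,
\[
\nabla^N_{e_i}\nabla^N_{e_i}(f\xi)=(e_ie_if)\,\xi-2(e_if)\,Ae_i-f\bigl((\nabla_{e_i}A)e_i+|Ae_i|^2\,\xi\bigr),
\]
so that, summing over $i$,
\[
\Delta(f\xi)=\bigl(\Delta f-f|A|^2\bigr)\,\xi-2A(\nabla f)-f\sum_{i}(\nabla_{e_i}A)e_i .
\]
The term $\sum_i(\nabla_{e_i}A)e_i$ is rewritten by means of the Codazzi equation \eqref{eq:Codazzi}: taking $V=\xi$ there gives $(\nabla_XA)Y-(\nabla_YA)X=-(R^N(X,Y)\xi)^{\top}$, and combining this with the self-adjointness of each $\nabla_{e_i}A$ and the pair symmetries of $R^N$ yields, after a routine trace computation,
\[
\sum_{i}(\nabla_{e_i}A)e_i=\nabla(\trace A)+\bigl(\trace R^N(\cdot,\xi)\cdot\bigr)^{\top}=2\nabla f+\bigl(\trace R^N(\cdot,\xi)\cdot\bigr)^{\top}.
\]

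It then remains to substitute into \eqref{tau-2}, using also $\trace R^N(d\phi(\cdot),\tau(\phi))d\phi(\cdot)=2f\,\trace R^N(\cdot,\xi)\cdot$ (here the trace over the tangent frame equals that over a full adapted frame, since $R^N(\xi,\xi)\cdot=0$). Decomposing $\trace R^N(\cdot,\xi)\cdot$ into $\langle\trace R^N(\cdot,\xi)\cdot,\xi\rangle\,\xi$ plus its tangential part, one finds that the normal component of $\tau_2(\phi)$ is a nonzero multiple of $\Delta f-f|A|^2-f\langle\trace R^N(\cdot,\xi)\cdot,\xi\rangle$ and the tangential component a nonzero multiple of $A(\nabla f)+f\nabla f+f(\trace R^N(\cdot,\xi)\cdot)^{\top}$; setting both to zero gives the claimed equations.

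The only point requiring care is the last displayed identity, namely getting the correct sign and curvature normalization when passing from $\sum_i(\nabla_{e_i}A)e_i$ to $\nabla(\trace A)$ plus a curvature correction; once the Codazzi equation and the symmetries of $R^N$ are used correctly there, the rest is bookkeeping in the geodesic frame at $p$, and nothing depends on the choice of $p$.
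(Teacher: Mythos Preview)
Your argument is correct: the computation of $\Delta(f\xi)$ in a geodesic frame, the use of Codazzi to convert $\sum_i(\nabla_{e_i}A)e_i$ into $2\nabla f+(\trace R^N(\cdot,\xi)\cdot)^{\top}$, and the subsequent normal/tangential splitting of $\tau_2(\phi)$ all go through with the paper's sign conventions, yielding exactly the stated system (with overall factors $2$ and $-4$ on the normal and tangential parts, respectively).

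However, there is no proof in the paper to compare against. The theorem is quoted from \cite{LMO} and stated without proof; the paper merely remarks that it is obtained by ``specializing a general result in \cite{LMO} in the case of surfaces.'' Your write-up is essentially the standard derivation one finds in that literature, so it is consistent with what the cited reference does, but the present paper itself provides nothing beyond the citation.
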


\begin{corollary}\label{c:bicons}
A surface $\Sigma^2$ in a Riemannian manifold $N^n$ is biconservative if and only if
\begin{equation}\label{eq:bicons}
A(\nabla f)+f\nabla f+f\trace(R^N(\cdot,\xi)\cdot)^{\top}=0.
\end{equation}
\end{corollary}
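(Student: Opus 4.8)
The plan is to read Corollary \ref{c:bicons} off directly from Theorem \ref{decomposition} together with the characterization of biconservativity recorded just after the definition. The chain of equivalences I would use is: $\Sigma^2$ is biconservative $\iff \Div S_2=0 \iff (\Div S_2)^\sharp=0 \iff \tau_2(\phi)^\top=0$, where the last step is the identity $(\Div S_2)^\sharp=-\tau_2(\phi)^\top$ for isometric immersions quoted in the Preliminaries. So the whole matter reduces to identifying the tangent part of the bitension field of the inclusion $\phi:\Sigma^2\to N^n$.

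Next I would recall that the system in Theorem \ref{decomposition} is precisely the splitting of the biharmonic equation $\tau_2(\phi)=0$ into its component along $\xi$ and its component tangent to $\Sigma^2$. Indeed, for a surface $\tau(\phi)=\trace\sigma=2f\xi$, and inserting this into \eqref{tau-2} and expanding $\Delta(f\xi)$ and $\trace R^N(d\phi(\cdot),f\xi)d\phi(\cdot)$ by means of the Gauss and Weingarten formulas, one sees that the first line of the system is $\langle\tau_2(\phi),\xi\rangle=0$ and the second line is $\tau_2(\phi)^\top=0$ (up to an overall nonzero constant). This is exactly the surface case of the general splitting proved in \cite{LMO}, so I would cite it rather than redo the computation.

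Putting the two observations together, biharmonicity of $\Sigma^2$ is the conjunction of the two equations, whereas biconservativity is the single requirement $\tau_2(\phi)^\top=0$, that is, $A(\nabla f)+f\nabla f+f\,\trace(R^N(\cdot,\xi)\cdot)^\top=0$, which is \eqref{eq:bicons}. There is no genuine obstacle here: the corollary is a formal consequence of the theorem. The only point worth a moment's care is confirming that the second line of Theorem \ref{decomposition} really is the tangent component of $\tau_2(\phi)$ and not some rearrangement mixing in normal data; this becomes transparent once $\tau(\phi)=2f\xi$ is substituted, since then the curvature term contributes $-2f\,\trace(R^N(\cdot,\xi)\cdot)$ tangentially and the $\Delta(f\xi)$ term supplies the $A(\nabla f)+f\nabla f$ part along $\Sigma^2$.
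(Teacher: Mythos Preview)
Your proposal is correct and matches the paper's own approach: the corollary is stated immediately after Theorem \ref{decomposition} with no separate proof, being an evident consequence of the fact (recorded just before the definition) that biconservativity is the vanishing of $\tau_2(\phi)^\top$, which is exactly the second equation in the splitting of Theorem \ref{decomposition}.
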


Next, we shall briefly recall some basic facts on $\sol$. This Lie group is $\mathbb{R}^3$ with the Riemannian metric
$$
\langle,\rangle=e^{2z}dx^2+e^{-2z}dy^2+dz^2,
$$
where $(x,y,z)$ are the canonical coordinates of $\mathbb{R}^3$.

A left-invariant orthonormal frame field $\{E_1,E_2,E_3\}$ with respect to this metric, called the canonical frame, is defined by
$$
E_1=e^{-z}\frac{\partial}{\partial x},\quad E_2=e^z\frac{\partial}{\partial y},\quad E_3=\frac{\partial}{\partial z}.
$$
The Levi-Civita connection of $\sol$ is then the following
\begin{equation}\label{nabla_sol}
\begin{array}{lll}
\bar\nabla_{E_1}E_1=-E_3,& \bar\nabla_{E_1}E_2=0,& \bar\nabla_{E_1}E_3=E_1\\ \\
\bar\nabla_{E_2}E_1=0,& \bar\nabla_{E_2}E_2=E_3,& \bar\nabla_{E_2}E_3=-E_2\\ \\
\bar\nabla_{E_3}E_1=0,& \bar\nabla_{E_3}E_2=0,& \bar\nabla_{E_3}E_3=0.
\end{array}
\end{equation}

One can see that the vertical vector field $E_3$ foliates $\sol$ by vertical geodesics. Moreover, we get that the sectional curvatures of the vertical plane fields $(E_1,E_3)$ and $(E_2,E_3)$ are equal to $-1$, while that of the horizontal plane field $(E_1,E_2)$ is equal to $1$. 

The only totally geodesic surfaces in $\sol$ are the leaves of the first two canonical foliations
$$
\mathcal{F}_1\equiv\{x=\cst\}\quad\textnormal{and}\quad\mathcal{F}_2\equiv\{y=\cst\},
$$
which are isometric to the hyperbolic plane $\mathbb{H}^2$ (see \cite{ST}).

The leaves of the third canonical foliation $\mathcal{F}_3\equiv\{z=\cst\}$ are isometric to $\mathbb{R}^2$ with its usual flat metric and are minimal.

The curvature tensor $\bar R$ of $\sol$ is given by (see \cite{ST})
\begin{eqnarray}\label{eq:barR}
\bar R(X,Y)Z&=&\langle Y,Z\rangle X-\langle X,Z\rangle Y+2\langle Z,E_3\rangle(\langle X,E_3\rangle Y-\langle Y,E_3\rangle X)\\\nonumber &&+2(\langle X,Z\rangle\langle Y,E_3\rangle-\langle Y,Z\rangle\langle X,E_3\rangle)E_3.
\end{eqnarray}

\section{Biconservative surfaces in $\sol$}

As we have seen, CMC biharmonic surfaces in $\sol$ are minimal (\cite{OW}). The following result shows that this stands true when we only keep the weaker biconservative condition.

\begin{proposition}\label{p:biconscmc}
A CMC biconservative surface in $\sol$ is minimal.
\end{proposition}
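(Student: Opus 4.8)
The plan is to specialize the biconservative equation \eqref{eq:bicons} to the case $f=\cst\neq 0$ and derive a contradiction. When $f$ is constant, $\nabla f=0$, so \eqref{eq:bicons} reduces to the pointwise algebraic condition
$$
f\,\trace\big(\bar R(\cdot,\xi)\cdot\big)^{\top}=0,
$$
and since $f\neq 0$ (otherwise we are done) this forces the tangential part of $\trace\bar R(\cdot,\xi)\cdot$ to vanish identically on $\Sigma^2$. The first step, then, is to compute $\trace\bar R(\cdot,\xi)\cdot$ explicitly using the curvature formula \eqref{eq:barR}. Writing $\{e_1,e_2\}$ for a local orthonormal tangent frame and plugging $X=e_i$, $Z=e_i$, $Y=\xi$ into \eqref{eq:barR} and summing over $i=1,2$, most terms collapse via $\sum_i\langle e_i,e_i\rangle=2$ and $\sum_i\langle e_i,\xi\rangle e_i=0$ (since $\xi\perp\Sigma^2$). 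I expect the result to be an expression of the schematic form $\trace\bar R(\cdot,\xi)\cdot = \alpha\,\xi + \beta\,\langle\xi,E_3\rangle E_3 + \gamma\,E_3$ plus terms involving $\sum_i\langle e_i,E_3\rangle e_i$, i.e. the tangential projection $E_3^{\top}$ of the vertical vector. The key point is that the tangential component of this trace will be a nonzero multiple of $E_3^{\top}$, say $c\,\langle\xi,E_3\rangle\,E_3^{\top}$ for an explicit nonzero constant $c$ (I anticipate $c=-4$ or similar, coming from the coefficient $2$ in \eqref{eq:barR} appearing twice).

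The second step is to analyze the equation $\langle\xi,E_3\rangle\,E_3^{\top}=0$ thus obtained. This splits into two cases on the open set where it must hold. Either $\langle\xi,E_3\rangle=0$, meaning $E_3$ is everywhere tangent to $\Sigma^2$, or $E_3^{\top}=0$, meaning $E_3=\pm\xi$ is everywhere normal, i.e. $\Sigma^2$ is a leaf of the foliation $\mathcal{F}_3\equiv\{z=\cst\}$; by continuity and connectedness one of these holds on the whole surface. In the second case, the leaves $\{z=\cst\}$ are already noted to be minimal in the preliminaries, contradicting $f\neq 0$. So the real work is the first case: $E_3$ is a unit tangent vector field on $\Sigma^2$.

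The third step handles the case $E_3\in T\Sigma^2$. Here I would complete $E_3$ to an orthonormal tangent frame $\{e_1=E_3,\ e_2\}$ where $e_2$ is a unit horizontal combination $e_2=\cos\theta\,E_1+\sin\theta\,E_2$, and use the Levi-Civita connection \eqref{nabla_sol} to compute the shape operator $A$ via the Weingarten formula $\bar\nabla_X\xi=-AX$, where $\xi=-\sin\theta\,E_1+\cos\theta\,E_2$ (up to sign). Using \eqref{nabla_sol} one computes $\bar\nabla_{E_3}\xi$ and $\bar\nabla_{e_2}\xi$ and reads off the entries of $A$ in this frame; in particular the mean curvature $f=\tfrac12\trace A$ will come out as an explicit expression in $\theta$ and its derivatives. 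The constancy of $f$, together with the Codazzi equation \eqref{eq:Codazzi} for $\sol$ (whose right-hand side involves $\bar R$ as in \eqref{eq:barR}), should pin down $\theta$ and ultimately force $f=0$. The main obstacle I expect is precisely this last sub-case: it is a genuine computation in the geometry of $\sol$ with the vertical direction tangent, and one has to juggle the Gauss \eqref{K} and Codazzi \eqref{eq:Codazzi} equations simultaneously to squeeze out the contradiction — unlike the clean algebraic collapse in the case $E_3$ normal. A convenient shortcut, if available, is that surfaces containing the vertical direction $E_3$ and having constant mean curvature may already be classified (or severely constrained) by the CMC results cited in the introduction, e.g. \cite{ST} on totally umbilic surfaces or the constant-angle classification \cite{LM}, which would let me conclude $f=0$ without a from-scratch integration.
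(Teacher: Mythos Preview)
Your approach is correct and essentially identical to the paper's: reduce via \eqref{eq:bicons} and \eqref{eq:barR} to the condition $\langle\xi,E_3\rangle\,E_3^{\top}=0$, dispatch the $E_3$-normal case as a minimal leaf of $\mathcal{F}_3$, and then treat the $E_3$-tangent case by computing in the frame $\{X_1=E_3,\ X_2=\cos\alpha\,E_1+\sin\alpha\,E_2\}$. For that last case the paper does not invoke Codazzi at all: after noting $\sigma(E_3,E_3)=0$ (so $\langle AX_1,X_1\rangle=0$, $\langle AX_2,X_2\rangle=2f$), it simply expands $\bar\nabla_{X_2}E_1$ and $\bar\nabla_{X_2}E_2$ once via \eqref{nabla_sol} and once via Gauss--Weingarten, equates the $X_1$-components, and reads off $\sin\alpha\cos\alpha=0$, whence $\xi\in\{\pm E_1,\pm E_2\}$ and $\Sigma^2$ is a totally geodesic leaf of $\mathcal{F}_1$ or $\mathcal{F}_2$ --- so your ``main obstacle'' is considerably lighter than you anticipate, and no appeal to \cite{ST} or \cite{LM} is needed.
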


\begin{proof} From Corrolary \ref{c:bicons} and the expression \eqref{eq:barR} of the curvature tensor of $\sol$, one can easily see that a CMC surface $\Sigma^2$ is biconservative if and only if either the vector field $E_3$ is either normal or tangent to the surface. 

If $E_3$ is normal, then $\Sigma^2$ is minimal and therefore we only have to study the case when $E_3$ is tangent. In this situation, we consider an orthonormal frame field $\{X_1=E_3,X_2\}$ on the surface and, since $\bar\nabla_{E_3}E_3=0$, it follows that $AX_1=0$ and $AX_2=2fX_2$. Also, we can write $X_2$ as
$$
X_2=\cos\alpha E_1+\sin\alpha E_2,
$$
and then
$$
E_1=\cos\alpha X_2+\sin\alpha\xi,\quad\quad E_2=\sin\alpha X_2-\cos\alpha\xi,
$$
along the surface, where $\xi=\sin\alpha E_1-\cos\alpha E_2$ is a unit vector field normal to $\Sigma^2$.

Next, using formulas \eqref{nabla_sol} and the Gauss and Weingarten equations of the surface, one obtains
\begin{eqnarray*}
\bar\nabla_{X_2}E_1&=&-\cos\alpha E_3\\&=&X_2(\cos\alpha)X_2+\cos\alpha\nabla_{X_2}X_2+2f\cos\alpha\xi+X_2(\sin\alpha)\xi-2f\sin\alpha X_2,
\end{eqnarray*}
and
\begin{eqnarray*}
\bar\nabla_{X_2}E_2&=&\sin\alpha E_3\\&=&X_2(\sin\alpha)X_2+\sin\alpha\nabla_{X_2}X_2+2f\sin\alpha\xi-X_2(\cos\alpha)\xi+2f\cos\alpha X_2.
\end{eqnarray*}

Taking the inner product with $X_1$ in both equations, we get 
$$
\cos\alpha\langle\nabla_{X_2}X_2,X_1\rangle=-\cos\alpha\quad\textnormal{and}\quad \sin\alpha\langle\nabla_{X_2}X_2,X_1\rangle=\sin\alpha,
$$
which leads to $\sin\alpha\cos\alpha=0$. This means that either $E_1$ or $E_2$ is normal to $\Sigma_2$ and therefore the surface is totally geodesic.
\end{proof}

\begin{remark} In contrast to our situation, any CMC surface in a $3$-dimensional space form is biconservative \cite{CMOP} and also in the other $3$-dimensional BCV-spaces (that are local models for six  of the Thurston geometries, the exceptions being $\mathbb{H}^3$ and $\sol$) non-minimal CMC biconservative surfaces do exist \cite{MOP}.
\end{remark}

We will henceforth focus on non-CMC biconservative surfaces. Let $\Sigma^2$ be a biconservative surface in $\sol$ such that $\nabla f\neq 0$ at a point $p\in\Sigma^2$. Therefore, there exists a neighborhood $V$ of $p$ with $\nabla f\neq 0$ on $V$. Since $f$ cannot vanish on $V$, there exists on open subset $U$ of $V$ such that $f\neq 0$ at each point of $U$. Moreover, we can assume that $f>0$ on $U$. Next, we consider an orthonormal frame field $\{X_1,X_2\}$ on $U$, where
$$
X_1=\frac{\nabla f}{|\nabla f|},\quad X_2\perp X_1,\quad |X_2|=1.
$$
We now have
$$
\nabla f=X_1(f)X_1+X_2(f)X_2,
$$
which leads to
\begin{equation}\label{eq:nablaf}
X_1(f)=|\nabla f|>0\quad\textnormal{and}\quad X_2(f)=0.
\end{equation}

Next, let $H=f\xi$ be the mean curvature vector field of $\Sigma^2$, where $\xi$ is a unit vector field normal to the surface.  By a straightforward computation, using isothermal coordinates $(x,y)$ on the surface and since $\Sigma^2$ is biconservative, one obtains (see also \cite{LO}),
$$
E=\left(\nabla_{\frac{\partial}{\partial x}}A_H\right)\frac{\partial}{\partial y}-\left(\nabla_{\frac{\partial}{\partial y}}A_H\right)\frac{\partial}{\partial x}=3f\left(-\frac{\partial f}{\partial y}\frac{\partial}{\partial x}+\frac{\partial f}{\partial x}\frac{\partial}{\partial y}\right),
$$
which shows that $X_1\perp E$ on $U$, i.e., $E$ and $X_2$ are collinear. 

On the other hand, from the Codazzi equation \eqref{eq:Codazzi} and the expression of the curvature tensor \eqref{eq:barR}, we get
$$
E=\left(\nabla_{\frac{\partial}{\partial x}}A_H\right)\frac{\partial}{\partial y}-\left(\nabla_{\frac{\partial}{\partial y}}A_H\right)\frac{\partial}{\partial x}=2f\langle\xi,E_3\rangle\left(\left\langle\frac{\partial}{\partial y},E_3\right\rangle\frac{\partial}{\partial x}-\left\langle\frac{\partial}{\partial x},E_3\right\rangle\frac{\partial}{\partial y}\right),
$$
and this readily shows that $E$ is orthogonal to $E_3$. Therefore, the two vector fields $X_2$ and $E_3$ are orthogonal and, along $U$, we can write
\begin{equation}\label{eq:E}
\begin{cases}
E_1=\sin\beta\sin\theta X_1+\cos\beta X_2-\sin\beta\cos\theta\xi\\
E_2=-\cos\beta\sin\theta X_1+\sin\beta X_2+\cos\beta\cos\theta\xi\\
E_3=\cos\theta X_1+\sin\theta\xi,
\end{cases}
\end{equation}
or, equivalently,
\begin{equation}\label{eq:X}
\begin{cases}
X_1=\sin\beta\sin\theta E_1-\cos\beta\sin\theta E_2+\cos\theta E_3\\
X_2=\cos\beta E_1+\sin\beta E_2\\
\xi=-\sin\beta\cos\theta E_1+\cos\beta\cos\theta E_2+\sin\theta E_3,
\end{cases}
\end{equation}
where $\beta$ and $\theta$ are real valued functions on $U$.

From Corrolary \ref{c:bicons} and formula \eqref{eq:barR} we easily get that the biconservative condition is equivalent to
\begin{equation}\label{eq:l1}
AX_1=-\left(f+\frac{f}{|\nabla f|}\sin(2\theta)\right)X_1=\lambda_1X_1,
\end{equation}
and therefore $X_1$ is a principal direction with the corresponding eigenfunction  $\lambda_1$. It follows that also $X_2$ is a principal direction and 
\begin{equation}\label{eq:l2}
AX_2=\left(3f+\frac{f}{|\nabla f|}\sin(2\theta)\right)X_2=\lambda_2X_2.
\end{equation}

In the following, working on $U$, we will prove a sequence of three lemmas that will eventually lead to our first main result.

\begin{lemma}\label{lemma}
On the set $U$ the following identities hold:
\begin{enumerate}

\item $X_1(\theta)=-\lambda_1+\cos(2\beta)\sin\theta$;

\item $X_2(\theta)=-\sin(2\beta)$;

\item $\cos\theta\langle\nabla_{X_1}X_1,X_2\rangle=\sin(2\beta)\sin\theta$;

\item $\cos\theta\langle\nabla_{X_2}X_1,X_2\rangle=\lambda_2\sin\theta+\cos(2\beta)$;

\item $(X_1(\beta)-\langle\nabla_{X_1}X_1,X_2\rangle\sin\theta)\sin\beta=0$;

\item $X_1(\beta)\sin\beta\cos\theta=2\sin^2\beta\cos\beta\sin^2\theta$;

\item $X_2(\beta)\cos\theta=\lambda_2+\cos(2\beta)\sin\theta$;

\item $X_2(\beta)\sin\theta-\langle\nabla_{X_2}X_1,X_2\rangle=-\cos(2\beta)\cos\theta$.

\end{enumerate}
\end{lemma}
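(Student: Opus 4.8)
The plan is to derive all eight identities by systematically expanding the Gauss--Weingarten machinery applied to the canonical frame $\{E_1, E_2, E_3\}$ along $U$, using the substitution \eqref{eq:X} and the known Levi-Civita connection \eqref{nabla_sol}. The essential observation is that the left-invariant covariant derivatives $\bar\nabla_{E_i}E_j$ are all prescribed, so if I write any $\bar\nabla_{X_a}E_b$ two ways---once by expressing $X_a$ in the $E$-frame and using \eqref{nabla_sol}, and once by expressing $E_b$ in the $\{X_1,X_2,\xi\}$-frame and using the Gauss and Weingarten formulas together with \eqref{eq:l1}--\eqref{eq:l2}---and then equate the two expansions componentwise against $X_1$, $X_2$, $\xi$, I obtain a large system of scalar equations. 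The identities (1)--(8) should fall out of this system after bookkeeping. Concretely, I would start from $E_3 = \cos\theta\, X_1 + \sin\theta\, \xi$ and compute $\bar\nabla_{X_1} E_3$ and $\bar\nabla_{X_2} E_3$: since $\bar\nabla_{X_a} E_3 = \cos\theta\, X_a(\theta)\cdot(\text{rotation terms}) + \ldots$ on one side, while on the other side $E_3$ in terms of $E_1, E_2$ gives $\bar\nabla_{X_a}E_3$ via \eqref{nabla_sol} as $\sin\beta\sin\theta\, \bar\nabla_{X_a}E_1 - \cos\beta\sin\theta\,\bar\nabla_{X_a}E_2 + X_a(\ldots)$-type terms; matching the $\xi$-components will produce (1) and (2), and matching the $X_2$-components of the $\bar\nabla_{X_a}E_1$, $\bar\nabla_{X_a}E_2$ computations will produce (3) and (4).

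For the remaining identities I would instead differentiate $\xi$ (or equivalently $X_1, X_2$) in the $E$-frame. Writing $X_1 = \sin\beta\sin\theta\, E_1 - \cos\beta\sin\theta\, E_2 + \cos\theta\, E_3$ and computing $\bar\nabla_{X_1}X_1$ via \eqref{nabla_sol} yields an expression involving $X_1(\beta)$, $X_1(\theta)$ and the structure constants; comparing with $\bar\nabla_{X_1}X_1 = \langle\nabla_{X_1}X_1,X_2\rangle X_2 + \lambda_1\xi$ (using that $X_1$ is principal) and taking components against $X_2$ and against $X_1$ (orthogonality) gives (5) and (6). Similarly, $\bar\nabla_{X_2}X_1$ compared with $\nabla_{X_2}X_1 + \sigma(X_2,X_1) = \langle\nabla_{X_2}X_1,X_2\rangle X_2 + 0\cdot\xi$ (since $X_1,X_2$ are eigendirections, $\sigma(X_1,X_2)=\langle AX_1,X_2\rangle\xi = 0$) yields (7) and (8). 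Throughout I would substitute (1) and (2) wherever $X_a(\theta)$ appears, to put everything in the stated form, and use $\langle\nabla_{X_a}X_a, X_a\rangle = 0$ and $\langle\nabla_{X_1}X_2,X_1\rangle = -\langle\nabla_{X_1}X_1,X_2\rangle$ to reduce the number of independent connection coefficients to the two appearing in (3)--(4).

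The main obstacle is purely organizational rather than conceptual: the computation of each $\bar\nabla_{X_a}E_b$ generates a trigonometric expression in $\beta$ and $\theta$ with six or more terms, and one must carefully collect the coefficients of $E_1$, $E_2$, $E_3$ (or of $X_1$, $X_2$, $\xi$) and invoke the product-to-sum identities (e.g.\ $2\sin^2\beta = 1-\cos(2\beta)$, $2\sin\beta\cos\beta = \sin(2\beta)$) to reach the compact forms listed. A secondary subtlety is keeping straight which frame is "input" and which is "output" in each derivative: mixing them is the likeliest source of sign errors. Since $\sin\beta$ and $\cos\theta$ appear as factors in several identities (notably (5), (6)), I would not divide by them---the identities are stated in product form precisely so they remain valid where those functions vanish---so no case distinction on the vanishing loci is needed at this stage.
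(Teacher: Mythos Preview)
Your plan is correct and coincides with the paper's proof: the paper likewise obtains (1)--(4) by expanding $\bar\nabla_{X_1}E_3$ and $\bar\nabla_{X_2}E_3$ two ways and pairing with $X_1,X_2,\xi$, and obtains (5)--(8) from $\bar\nabla_{X_1}E_1$ and $\bar\nabla_{X_2}E_1$ in the same fashion (your dual choice of differentiating $X_1$ instead of $E_1$ for the latter block is an equivalent bookkeeping variant). One small correction: the $X_1$-component of $\bar\nabla_{X_1}X_1$ is automatically zero, so it carries no information---you will recover the second of the pair (5)/(6) by combining the $X_2$-component with (3), exactly as the paper's remark after the lemma indicates.
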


\begin{proof} From \eqref{nabla_sol}, \eqref{eq:E}, and \eqref{eq:X}, we have
\begin{eqnarray*}
\bar\nabla_{X_1}E_3&=&\sin\beta\sin\theta E_1+\cos\beta\sin\theta E_2\\&=&-X_1(\theta)\sin\theta X_1+\cos\theta\nabla_{X_1}X_1+\cos\theta\sigma(X_1,X_1)\\&&+X_1(\theta)\cos\theta\xi-\sin\theta\lambda_1X_1,
\end{eqnarray*}
and
\begin{eqnarray*}
\bar\nabla_{X_2}E_3&=&\cos\beta E_1-\sin\beta E_2\\&=&-X_2(\theta)\sin\theta X_1+\cos\theta\nabla_{X_2}X_1+X_2(\theta)\cos\theta\xi-\sin\theta\lambda_2X_2.
\end{eqnarray*}
Considering the inner product with $X_1$, $X_2$, and $\xi$, one obtains the first four equations.

Next, again using \eqref{nabla_sol}, \eqref{eq:E}, and \eqref{eq:X}, it follows
\begin{eqnarray*}
\bar\nabla_{X_1}E_1&=&-\sin\beta\sin\theta E_3\\&=&X_1(\theta)\sin\beta\cos\theta X_1+X_1(\beta)\cos\beta\sin\theta X_1+\sin\beta\sin\theta\nabla_{X_1}X_1\\&&+\sin\beta\sin\theta\sigma(X_1,X_1)-X_1(\beta)\sin\beta X_2+\cos\beta\nabla_{X_1}X_2\\&&+X_1(\theta)\sin\beta\sin\theta\xi-X_1(\beta)\cos\beta\cos\theta\xi+\sin\beta\cos\theta\lambda_1X_1.
\end{eqnarray*}
We take the inner product with $X_2$ and obtain the fifth identity. From this and the third identity, the sixth one follows immediately.

Finally, we have
\begin{eqnarray*}
\bar\nabla_{X_2}E_1&=&-\cos\beta E_3\\&=&X_2(\theta)\sin\beta\cos\theta X_1+X_2(\beta)\cos\beta\sin\theta X_1+\sin\beta\sin\theta\nabla_{X_2}X_1\\&&+\cos\beta\sigma(X_2,X_2)-X_2(\beta)\sin\beta X_2+\cos\beta\nabla_{X_2}X_2\\&&+X_2(\theta)\sin\beta\sin\theta\xi-X_2(\beta)\cos\beta\cos\theta\xi+\sin\beta\cos\theta\lambda_2X_2,
\end{eqnarray*}
and, taking the inner product with each $X_1$, $X_2$, and $\xi$, also using the third and the fourth identities, we prove the last two items of the lemma.
\end{proof}

\begin{remark} Any two of the third, fifth, and sixth items of Lemma \ref{lemma} imply the other one. This also happens with the fourth, seventh, and eighth items. Therefore only six of the identities are independent. It is easy to verify that this is the maximum number of independent equations of this type that can be derived from the expressions of $\bar\nabla_{X_i}E_j$, $i,j\in\{1,2,3\}$.
\end{remark}

The next result shows that one of the two vector fields $E_1$ or $E_2$ is tangent to the surface $\Sigma^2$ at any point of $U$.

\begin{lemma}\label{beta} On the set $U$ we have $\sin(2\beta)=0$.
\end{lemma}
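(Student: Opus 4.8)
The plan is to argue by contradiction. Suppose $\sin(2\beta)$ is not identically zero, so that $\Omega:=\{p\in U:\sin(2\beta)(p)\neq 0\}$ is a nonempty open set; on $\Omega$ both $\sin\beta\neq0$ and $\cos\beta\neq0$. I would first note that $\cos\theta$ cannot vanish anywhere on $\Omega$: at a point with $\cos\theta=0$ one has $\sin\theta=\pm1$, and then item (3) of Lemma \ref{lemma} forces $\sin(2\beta)=0$, a contradiction. Likewise, using item (2), $X_2(\theta)=-\sin(2\beta)$, the set $\{\sin\theta=0\}$ has empty interior in $\Omega$ (on an open piece of it $\theta$ would be locally constant, forcing $X_2(\theta)=0$). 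Hence $\Omega':=\Omega\cap\{\sin\theta\neq0\}$ is open, dense in $\Omega$, and nonempty, and $\sin(2\beta)$, $\sin\theta$, $\cos\theta$ are all nowhere zero on $\Omega'$; it suffices to reach a contradiction there.

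On $\Omega'$ put $u=|\nabla f|>0$ and $a=\langle\nabla_{X_1}X_1,X_2\rangle$, $b=\langle\nabla_{X_2}X_1,X_2\rangle$; since $\{X_1,X_2\}$ is orthonormal and $\nabla$ is torsion-free, $\nabla_{X_1}X_1=aX_2$, $\nabla_{X_2}X_2=-bX_1$ and $[X_1,X_2]=-aX_1-bX_2$. Dividing the identities of Lemma \ref{lemma} by the non-vanishing factors, items (5), (3), (7), (4) give
$$X_1(\beta)=a\sin\theta,\qquad a=\sin(2\beta)\tan\theta,\qquad X_2(\beta)=\frac{\lambda_2}{\cos\theta}+\cos(2\beta)\tan\theta,\qquad b=\lambda_2\tan\theta+\frac{\cos(2\beta)}{\cos\theta},$$
while items (1), (2), together with $\lambda_1=-f-\frac{f}{u}\sin(2\theta)$ and $\lambda_2=3f+\frac{f}{u}\sin(2\theta)$ from \eqref{eq:l1}, \eqref{eq:l2}, give $X_1(\theta)=f+\frac{f}{u}\sin(2\theta)+\cos(2\beta)\sin\theta$ and $X_2(\theta)=-\sin(2\beta)$. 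Applying the vector field $[X_1,X_2]=-aX_1-bX_2$ to $f$ and using $X_1(f)=u$, $X_2(f)=0$ yields the auxiliary relation $X_2(u)=a\,u$.

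The heart of the argument is to apply $[X_1,X_2]=-aX_1-bX_2$ to $\theta$, i.e. to expand $X_1(X_2(\theta))-X_2(X_1(\theta))=-a\,X_1(\theta)-b\,X_2(\theta)$ with all the data above inserted. After the expansion, the contributions containing $\cos(2\beta)$ cancel in total (one uses $\cos\theta+(\sin^2\theta-1)/\cos\theta=0$), the two $\sin(4\beta)$ terms cancel, and, with $\tan\theta\sin(2\theta)=2\sin^2\theta$ and $4f\sin^2\theta+2f\cos(2\theta)=2f$, the whole identity collapses, after dividing by $f>0$, to the single scalar equation
$$\frac{1+\sin^2\theta}{u}=-2\tan\theta,$$
so that $u$ equals an explicit function of $\theta$ on $\Omega'$.

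To finish, I would differentiate this last relation along $X_2$, substituting $X_2(\theta)=-\sin(2\beta)$ and $X_2(u)=a\,u=\sin(2\beta)\tan\theta\,u$, and divide by $\sin(2\beta)$; this produces a second identity $u=-\frac{1}{2}\sin\theta\cos\theta(\cos^2\theta+2)$. Equating the two expressions for $u$ and clearing the non-zero factors $\cos\theta$ and $\sin\theta$ reduces, by elementary trigonometry, to $\cos^4\theta=0$, hence $\cos\theta\equiv0$ on $\Omega'$ — contradicting the definition of $\Omega'$. Therefore $\Omega$ is empty and $\sin(2\beta)\equiv 0$ on $U$. I expect the only genuine difficulty to be the bookkeeping in the commutator computation applied to $\theta$: it involves roughly a dozen terms and works only because of the cancellations noted above; everything after the relation $(1+\sin^2\theta)/u=-2\tan\theta$ is short and routine.
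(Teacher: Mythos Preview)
Your proof is correct, and its overall architecture coincides with the paper's: assume $\sin(2\beta)\neq 0$ on an open set, derive the scalar relation $2|\nabla f|\sin\theta+\cos\theta(1+\sin^2\theta)=0$ (your $(1+\sin^2\theta)/u=-2\tan\theta$), differentiate it along $X_2$, and obtain $\cos\theta=0$ for a contradiction. The one genuine difference is in how the key scalar relation is obtained. The paper uses the second component of the Codazzi equation \eqref{eq:Codazzi}, namely $X_2(\lambda_1)\cos\theta+(\lambda_2-\lambda_1)\sin(2\beta)\sin\theta=0$, and computes $X_2(\lambda_1)$ separately from the biconservative formula $\lambda_1=-f-(f/|\nabla f|)\sin(2\theta)$ together with the commutator applied to $f$. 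You instead extract the same relation from the compatibility identity $X_1(X_2(\theta))-X_2(X_1(\theta))=[X_1,X_2](\theta)$, feeding in the items of Lemma~\ref{lemma}. Your route avoids invoking Codazzi explicitly (though the commutator on $\theta$, via item~(1), is effectively encoding the same integrability), at the price of a somewhat longer expansion; the paper's route is shorter because Codazzi packages the $X_2(\lambda_1)$ information directly. A minor remark: your density argument for $\sin\theta\neq 0$ is harmless but unnecessary, since once $(1+\sin^2\theta)/u=-2\tan\theta$ holds on $\Omega$, a point with $\sin\theta=0$ would give $1/u=0$.
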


\begin{proof} From the Codazzi equation \eqref{eq:Codazzi} and \eqref{eq:barR}, one obtains
$$
\left(\nabla_{X_1}A\right)X_2-\left(\nabla_{X_2}A\right)X_1=-\sin(2\theta)X_2,
$$
which can be written as
\begin{equation}\label{codazzi_split_initial}
\begin{cases}
X_1(\lambda_2)+(\lambda_2-\lambda_1)\langle\nabla_{X_2}X_1,X_2\rangle+\sin(2\theta)=0\\
X_2(\lambda_1)-(\lambda_2-\lambda_1)\langle\nabla_{X_1}X_2,X_1\rangle=0.
\end{cases}
\end{equation}
Using Lemma \ref{lemma}, the second equation become
\begin{equation}\label{codazzi_split}
X_2(\lambda_1)\cos\theta+(\lambda_2-\lambda_1)\sin(2\beta)\sin\theta=0.
\end{equation}

In the following, we will compute $X_2(\lambda_1)$. Since $X_2(f)=0$ and $X_1(f)=|\nabla f|$, we have, also using Lemma \ref{lemma},
\begin{eqnarray}\label{eq:x2f}
X_2(\lambda_1)&=&-X_2\left(f+\frac{f}{|\nabla f|}\sin(2\theta)\right)\\\nonumber &=&\frac{f}{|\nabla f|}\left(\frac{X_2(X_1(f))}{|\nabla f|}\sin(2\theta)+2\sin(2\beta)\cos(2\theta)\right).
\end{eqnarray}

Now, since $X_2(X_1(f))=X_1(X_2(f))-[X_1,X_2](f)=-[X_1,X_2](f)$ and, from Lemma \ref{lemma}, we know that
\begin{eqnarray*}
\cos\theta[X_1,X_2]&=&\cos\theta(\nabla_{X_1}X_2-\nabla_{X_2}X_1)\\&=&-\sin(2\beta)\sin\theta X_1-(\lambda_2\sin\theta+\cos(2\beta))X_2,
\end{eqnarray*}
one obtains
\begin{equation}\label{x2x1}
\cos\theta X_2(X_1(f))=\sin(2\beta)\sin\theta X_1(f)=\sin(2\beta)\sin\theta|\nabla f|.
\end{equation}
Replacing in \eqref{eq:x2f} we get
$$
X_2(\lambda_1)=\frac{2f}{|\nabla f|}\sin(2\beta)\cos^2\theta,
$$
and then, from \eqref{codazzi_split}, since 
$$
\lambda_2-\lambda_1=4f+\frac{2f}{|\nabla f|}\sin(2\theta),
$$
one obtains
$$
\frac{2\sin(2\beta)}{|\nabla f|}\left(2|\nabla f|\sin\theta+\cos\theta\left(1+\sin^2\theta\right)\right)=0.
$$
Therefore, at each point of $U$, either 
$$
\sin(2\beta)=0\quad\textnormal{or}\quad 2|\nabla f|\sin\theta+\cos\theta\left(1+\sin^2\theta\right)=0.
$$ 

Assume that $\sin(2\beta)\neq 0$ at a point $q\in U$. Then, there exists a neighborhood $W\subset U$ of $q$ such that $\sin(2\beta)\neq 0$ at all points of $W$, which means that 
\begin{equation}\label{no}
2|\nabla f|\sin\theta+\cos\theta\left(1+\sin^2\theta\right)=0
\end{equation}
on $W$. From here, it follows that
$$
2X_2(X_1(f))\sin\theta+(2|\nabla f|\cos\theta-\sin\theta-\sin^3\theta+2\sin\theta\cos^2\theta)X_2(\theta)=0,
$$ 
on $W$. We then multiply by $\cos\theta$, use \eqref{x2x1}, Lemma \ref{lemma}, and again \eqref{no}, to obtain, after a straightforward computation, that $\cos\theta=0$ on the set $W$. But, from \eqref{no}, this implies that also $\nabla f=0$ on $W$, which is a contradiction and we conclude.
\end{proof}

\begin{lemma}\label{theta} Throughout the set $U$ we have 
$$
\cos\theta\neq 0,\quad \sin\theta\neq0, \quad X_1(\theta)=-2f,\quad X_2(\theta)=0,\quad\nabla_{X_1}X_1=0,\quad X_2(X_1(f))=0,
$$ 
and either
\begin{enumerate}
\item $\nabla_{X_2}X_1=\cos\theta X_2$;

\item $\lambda_2=-\sin\theta$,
\end{enumerate}
or
\begin{enumerate}
\item $\nabla_{X_2}X_1=-\cos\theta X_2$;

\item $\lambda_2=\sin\theta$,
\end{enumerate}
as $\beta=0$ or $\beta=\pi/2$ on $U$.
\end{lemma}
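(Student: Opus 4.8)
The plan is to substitute the constraint $\sin(2\beta)=0$ from Lemma \ref{beta} into Lemma \ref{lemma} and into \eqref{eq:l1}--\eqref{eq:l2}, which gives everything except the two nonvanishing claims almost mechanically, and then to rule out $\cos\theta=0$ (and afterwards $\sin\theta=0$) by a short differential argument.

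Shrinking $U$ if necessary we may assume it is connected, so the continuous function $\beta$ with $\sin(2\beta)=0$ is constant; put $c=\cos(2\beta)\in\{1,-1\}$, the two alternatives of the statement being $c=1$ (the case $\beta=0$) and $c=-1$ (the case $\beta=\pi/2$). Since $\beta$ is constant, $X_1(\beta)=X_2(\beta)=0$, so Lemma \ref{lemma} gives at once $X_2(\theta)=-\sin(2\beta)=0$ from item (2), $\lambda_2=-c\sin\theta$ from item (7), and $\langle\nabla_{X_2}X_1,X_2\rangle=c\cos\theta$ from item (8), hence $\nabla_{X_2}X_1=c\cos\theta\,X_2$ since $X_1$ is a unit field. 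Using $\trace A=\lambda_1+\lambda_2=2f$ one gets $\lambda_1=2f+c\sin\theta$, and item (1) then yields $X_1(\theta)=-\lambda_1+c\sin\theta=-2f$. Comparing $\lambda_2=-c\sin\theta$ with \eqref{eq:l2} also produces the scalar identity
\begin{equation}\label{eq:constr}
|\nabla f|\,(3f+c\sin\theta)=-f\sin(2\theta)\qquad\text{on }U .
\end{equation}

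The key step is to show $\cos\theta\neq0$ on $U$. Suppose $\cos\theta(q)=0$ at some $q\in U$. Evaluating \eqref{eq:constr} at $q$, where $\sin(2\theta)=0$, and using $|\nabla f|>0$, forces $3f(q)+c\sin\theta(q)=0$. Now apply the vector field $X_1$ to \eqref{eq:constr}, using $X_1(f)=|\nabla f|$ from \eqref{eq:nablaf} and $X_1(\theta)=-2f$, and evaluate at $q$: the term carrying the factor $3f+c\sin\theta$ vanishes because that factor is zero at $q$, the term $X_1(f)\sin(2\theta)$ vanishes because $\sin(2\theta(q))=0$, and what remains, using $\cos(2\theta(q))=-1$, is $3\,|\nabla f|(q)^{2}=-4f(q)^{2}$ --- impossible since $f>0$. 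Hence $\cos\theta\neq0$ throughout $U$. Likewise $\sin\theta\neq0$: if $\sin\theta(q)=0$ then $\lambda_2(q)=-c\sin\theta(q)=0$, so $\lambda_1(q)=2f(q)$, whereas \eqref{eq:l1} with $\sin(2\theta(q))=0$ gives $\lambda_1(q)=-f(q)$, forcing $f(q)=0$, a contradiction.

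Finally, with $\cos\theta\neq0$ the remaining assertions follow immediately: item (3) of Lemma \ref{lemma} with $\sin(2\beta)=0$ gives $\langle\nabla_{X_1}X_1,X_2\rangle=0$, hence $\nabla_{X_1}X_1=0$; and \eqref{x2x1} gives $\cos\theta\,X_2(X_1(f))=0$, hence $X_2(X_1(f))=0$. Substituting $c=1$ in the case $\beta=0$ and $c=-1$ in the case $\beta=\pi/2$ turns $\nabla_{X_2}X_1=c\cos\theta\,X_2$ and $\lambda_2=-c\sin\theta$ into the two stated alternatives. The main obstacle is exactly the step $\cos\theta\neq0$: the trick is to notice that a hypothetical zero of $\cos\theta$ is automatically a zero of $3f+c\sin\theta$, so that a single differentiation of the constraint \eqref{eq:constr} there already yields a sign contradiction, rather than trying to integrate an ODE for the pair $(f,\theta)$ which degenerates at precisely those points.
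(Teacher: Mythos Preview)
Your proof is correct and follows essentially the same approach as the paper's: both substitute $\sin(2\beta)=0$ into Lemma~\ref{lemma} to read off $X_2(\theta)$, $\lambda_2$, $\nabla_{X_2}X_1$, and $X_1(\theta)$, handle $\sin\theta\neq0$ by the same $\lambda_1=2f=-f$ contradiction, and then deduce $\nabla_{X_1}X_1=0$ and $X_2(X_1(f))=0$ from items~(3) and \eqref{x2x1} once $\cos\theta\neq0$ is known. The only cosmetic difference is in the $\cos\theta\neq0$ step: the paper invokes the first Codazzi equation \eqref{codazzi_split_initial} to obtain $X_1(\lambda_2)=0$ at a hypothetical zero and then computes $X_1(\lambda_2)=3|\nabla f|+4f^2/|\nabla f|$ from \eqref{eq:l2}, whereas you differentiate the equivalent scalar identity \eqref{eq:constr} directly (using $3f+c\sin\theta=0$ at $q$ to kill the $X_1(|\nabla f|)$ term); the two computations yield the same contradiction $3|\nabla f|^2+4f^2=0$.
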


\begin{proof} The claims on $\nabla_{X_2}X_1$, $X_2(\theta)$, and $\lambda_2$ follow directly from Lemmas \ref{lemma} and~\ref{beta}. Then, using the expression of $\lambda_2$ and again the two previous lemmas, one obtains that $X_1(\theta)=-2f$.  

If $\sin\theta=0$ at a point $q\in U$, then, at this point, $\lambda_2=0$ and $\lambda_1=2f$. But, from the biconservative condition \eqref{eq:l1}, it follows that $\lambda_1=-f$ which implies that $f=0$, at $q$. Therefore, $\sin\theta\neq 0$ at any poiny of $U$.

Next, from the first equation of \eqref{codazzi_split_initial}, one can see that, if we assume $\cos\theta=0$ at a point $q\in U$, then, at that point, we have $X_1(\lambda_2)=0$. But, in general, on $U$,
$$
X_1(\lambda_2)=3X_1(f)+X_1\left(\frac{f}{|\nabla f|}\right)\sin(2\theta)+\frac{2f}{|\nabla f|}X_1(\theta)\cos(2\theta),
$$
and therefore, at $q$, 
$$
X_1(\lambda_2)=3|\nabla f|+\frac{4f^2}{|\nabla f|},
$$
as $X_1(\theta)=-2f$. Since $f>0$ and $|\nabla f|\ 0$ on $U$, this is a contradiction, i.e., $\cos\theta\neq 0$ throughout $U$. To conclude, we note that $\nabla_{X_1}X_1=0$ now follows immediately from Lemma \ref{lemma}, and $X_2(X_1(f))=0$ from equation \eqref{x2x1}.
\end{proof}

Now, we can state the rigidity of biconservative surfaces result.

\begin{theorem}\label{main1} Let $\Sigma^2$ be a biconservative surface in $\sol$ with $f>0$ and $\nabla f\neq 0$ at any point. Then, locally, $\Sigma^2$ can be parametrized as either
\begin{equation}\label{t1}
x_1(u,v)=ve^{\Psi(u)}E_1+\Phi_1(u)e^{-\Psi(u)}E_2+\Psi(u)E_3=(v,\Phi_1(u),\Psi(u)),
\end{equation}
or
\begin{equation}\label{t2}
x_2(u,v)=\Phi_2(u)e^{-\Psi(u)}E_1+ve^{-\Psi(u)}E_2+\Psi(u)E_3=(\Phi_2(u),v,\Psi(u)),
\end{equation}
where
$$
\Psi(u)=\int_{u_0}^u\cos\theta(s)ds, \Phi_1(u)=-\int_{u_0}^u\sin\theta(s)e^{\Psi(s)}ds,\Phi_2(u)=\int_{u_0}^u\sin\theta(s)e^{-\Psi(s)}ds.
$$
The function $\theta=\theta(u)$ is given either by
\begin{equation}\label{thetanice}
\theta(u)=2\arctan e^{(1-\sqrt{13})u/3},\quad u<0,
\end{equation}
or by the following first order ODE
\begin{equation}\label{thetanotnice}
(\theta'+2a_1\sin\theta)^{6a_2}=c(\theta'+2a_2\sin\theta)^{6a_1},
\end{equation}
such that $\theta'<0$ and $\theta''<0$, where $a_{1,2}=(-1\pm\sqrt{13})/6$ and $c>0$ is a positive constant.
\end{theorem}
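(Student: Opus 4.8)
The plan is to use Lemmas \ref{lemma}, \ref{beta} and \ref{theta} to reduce to two explicit frame configurations on $U$, then in each to build coordinates adapted to the principal foliation, integrate the position map to obtain \eqref{t1} or \eqref{t2}, and finally extract and solve a single reduced ODE for $\theta$. By Lemma \ref{theta} (with Lemmas \ref{lemma}, \ref{beta} and \eqref{eq:X}) we are in one of two cases: if $\beta=0$ then $X_2=E_1$, $X_1=-\sin\theta\,E_2+\cos\theta\,E_3$, $\xi=\cos\theta\,E_2+\sin\theta\,E_3$, $\nabla_{X_2}X_1=\cos\theta\,X_2$, $\lambda_2=-\sin\theta$; if $\beta=\pi/2$ then $X_2=E_2$, $X_1=\sin\theta\,E_1+\cos\theta\,E_3$, $\xi=-\cos\theta\,E_1+\sin\theta\,E_3$, $\nabla_{X_2}X_1=-\cos\theta\,X_2$, $\lambda_2=\sin\theta$. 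In both cases $\nabla_{X_1}X_1=0$ (hence also $\nabla_{X_1}X_2=0$), $X_1(\theta)=-2f$, $X_2(\theta)=X_2(f)=0$, and $\cos\theta,\sin\theta$ vanish nowhere. The first step is the coordinate construction: since $\nabla_{X_1}X_1=0$ the integral curves of $X_1$ are geodesics of $\Sigma^2$, so parametrizing them by arc length $u$ issuing from a fixed integral curve of $X_2$ parametrized by $v$ gives $\partial_u=X_1$. As $[X_1,X_2]=\mp\cos\theta\,X_2$ is a multiple of $X_2$, the $X_1$-flow preserves the integral curves of $X_2$, so $\theta$ and $f$ are functions of $u$ alone and $\partial_v=\mu X_2$ for a positive $\mu$; the condition $[\partial_u,\partial_v]=0$ reads $\partial_u\mu=\pm\cos\theta\,\mu$, whence $\mu=e^{\pm\Psi(u)}$ with $\Psi(u)=\int_{u_0}^u\cos\theta(s)\,ds$.

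Next I would integrate the immersion $x\colon U\to\sol=\mathbb R^3$, writing ambient coordinates as $(\mathbf x,\mathbf y,\mathbf z)$ and using $E_1=e^{-\mathbf z}\partial_{\mathbf x}$, $E_2=e^{\mathbf z}\partial_{\mathbf y}$, $E_3=\partial_{\mathbf z}$. From $\partial_u x=X_1$ one gets $\partial_u\mathbf z=\cos\theta$ and $\partial_v\mathbf z=0$, so $\mathbf z=\Psi(u)$; then the factor $\mu=e^{\pm\Psi}$ exactly cancels the exponential in $E_1$ (resp.\ $E_2$), so $\partial_v x=\partial_{\mathbf x}$ (resp.\ $\partial_{\mathbf y}$) and the flat horizontal coordinate equals $v$, while the remaining horizontal coordinate integrates to $\Phi_1(u)=-\int_{u_0}^u\sin\theta\,e^{\Psi}$ (resp.\ $\Phi_2(u)=\int_{u_0}^u\sin\theta\,e^{-\Psi}$). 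This yields precisely \eqref{t1} (resp.\ \eqref{t2}); the $\beta=\pi/2$ case is the $\beta=0$ case with $E_1$ and $E_2$ interchanged, the two families being related by the ambient isometry $(\mathbf x,\mathbf y,\mathbf z)\mapsto(\mathbf y,\mathbf x,-\mathbf z)$ of $\sol$.

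To determine $\theta$, I would equate $\lambda_2=\mp\sin\theta$ (Lemma \ref{theta}) with the biconservative value $\lambda_2=3f+\frac{f}{|\nabla f|}\sin(2\theta)$ of \eqref{eq:l2} — equivalently, read $\lambda_1$ off the Gauss equation \eqref{K} — and substitute $f=-\tfrac12\theta'$, $|\nabla f|=X_1(f)=-\tfrac12\theta''$ (so $f>0$ forces $\theta'<0$ and $|\nabla f|>0$ forces $\theta''<0$). This gives $3\theta'\theta''=2\theta'\sin(2\theta)+2\theta''\sin\theta$ (with sign changes in the first and last terms when $\beta=\pi/2$). Regarding $p=\theta'$ as a function of $\theta$, so $\theta''=p\,dp/d\theta$, dividing by $p$, and then setting $p=z\sin\theta$, one reaches the separable equation $z'\sin\theta\,(3z-2)=-3\cos\theta\,(z+2a_1)(z+2a_2)$, using $3z^2-2z-4=3(z+2a_1)(z+2a_2)$ with $a_{1,2}=(-1\pm\sqrt{13})/6$. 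The constant solution $z\equiv-2a_1$ — the only one of $z\equiv-2a_1$, $z\equiv-2a_2$ consistent with $\theta'<0$ — integrates to $\tan(\theta/2)=e^{-2a_1u}$, i.e.\ \eqref{thetanice}, and $u<0$ is exactly the range where $\theta\in(\pi/2,\pi)$, that is, where $\theta'<0$ and $\theta''<0$ hold. For all other solutions, separating variables, integrating by partial fractions, and using $z+2a_i=(\theta'+2a_i\sin\theta)/\sin\theta$ together with the identity $A+B=3$ for the partial-fraction coefficients $A,B$, the integrated relation becomes $|\theta'+2a_1\sin\theta|^A|\theta'+2a_2\sin\theta|^B=\mathrm{const}$, which after raising to a suitable power is \eqref{thetanotnice} with $c>0$; the inequalities $\theta'<0$, $\theta''<0$ are just the geometric positivity noted above. (Conversely, reversing these steps shows every such parametrization is biconservative.)

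The step I expect to be the main obstacle is this last one: recognizing that the second order ODE linearizes — under the successive substitutions $p=\theta'(\theta)$ and $p=z\sin\theta$ — into a separable equation whose quadratic factor splits through $-2a_1,-2a_2$, then performing the partial-fraction integration and rescaling the exponents into the form $(6a_2,-6a_1)$ so as to land on \eqref{thetanotnice} exactly, and isolating the singular solution \eqref{thetanice} together with its maximal domain $u<0$. The coordinate construction and the integration of the position map are routine once Lemma \ref{theta} is in hand; the only delicate point there is fixing the integrating factor $e^{\pm\Psi}$ in $v$ so that the position map comes out literally as in \eqref{t1}–\eqref{t2} rather than as an equivalent reparametrization.
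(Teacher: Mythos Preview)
Your proposal is correct and follows essentially the same route as the paper: reduce to $\beta\in\{0,\pi/2\}$ via Lemmas \ref{lemma}--\ref{theta}, introduce coordinates with $\partial_u=X_1$, and solve the single ODE coming from the biconservative condition via a substitution that renders it separable; your substitution $p=\theta'$, $p=z\sin\theta$ is the paper's $y=\sin\theta$, $f=yg$ under the relabeling $z=-2g$, and the constant/non-constant solutions match up verbatim. The only noteworthy difference is that you integrate the position map explicitly to obtain \eqref{t1}--\eqref{t2}, whereas the paper recognizes the surface as a constant angle surface and quotes \cite{LM} for those parametrizations.
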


\begin{proof} From Lemma \ref{beta}, we know that either $E_1$ or $E_2$ is tangent to our surface and therefore $\Sigma^2$ is a constant angle surface as defined in \cite{LM}, where equations \eqref{t1} and \eqref{t2} of such surfaces were found. 

We will consider only the case of the first surface, that with our notations correspond to $\beta=0$, meaning that $X_2=E_1$. The computations in the second case are similar and therefore we will omit them.

In \cite{LM}, the local coordinates $(u,v)$ were chosen such that 
$$
X_1=\frac{\partial}{\partial u}\quad\textnormal{and}\quad X_2=\alpha(u,v)\frac{\partial}{\partial v}, 
$$
for some function $\alpha$, which is allowed by the fact that
$$
[X_1,X_2]=-\nabla_{X_1}X_2=-\cos\theta X_2.
$$
Therefore, in this setting, $X_2(f)=0$ and $X_2(\theta)=0$ imply that $f=f(u)$ and $\theta=\theta(u)$. It is now straightforward to verify that surfaces defined by \eqref{t1} and \eqref{t2} satisfy all identities in Lemma \ref{theta}.

Thus, we only have to find the function $\theta=\theta(u)$ by solving the biconservative equation, which, for the first surface, taking into account Lemma \ref{theta}, can be written as
$$
\lambda_1=-\left(f+\frac{f}{f'}\sin(2\theta)\right)=2f+\sin\theta,
$$
or, equivalently,
\begin{equation}\label{bi1}
3ff'+f'\sin\theta+f\sin(2\theta)=0,
\end{equation}
where $\theta'=-2f$. Denote $y=\sin\theta$ and then we can see that
$$
2yy'=2\theta'\sin\theta\cos\theta=-2f\sin(2\theta),
$$
and equation \eqref{bi1} becomes
$$
3ff'+yf'-yy'=0.
$$
We can write $f'=y'(df/dy)$ and, replacing in the last equation, we have
$$
3fy'\frac{df}{dy}+yy'\frac{df}{dy}-yy'=0.
$$
Since from Lemma \ref{theta} we know that $\cos\theta\neq 0$ and we also have $\theta'=-2f\neq 0$, it follows that $y'=\theta'\cos\theta\neq 0$, which shows that the above equation is equivalent to
$$
3f\frac{df}{dy}+y\frac{df}{dy}-y=0.
$$
We denote $f=yg$ and, after a straightforward computation, this equation reads as
\begin{equation}\label{g}
y(3g+1)\frac{dg}{dy}+3g^2+g-1=0.
\end{equation}

We now have two cases as $3g^2+g-1=0$ or $3g^2+g-1\neq 0$. In the first one, $g=a_{1,2}=(-1\pm\sqrt{13})/6$ is a constant and $f=a\sin\theta$, where $a$ stands either for $a_1$ or $a_2$. Since $\theta'=-2f$, it follows that
$$
\theta(u)=2\arctan e^{-2au},\quad f(u)=\frac{2ae^{-2au}}{1+e^{-4au}},\quad\textnormal{and}\quad f'(u)=\frac{4a^2e^{-2au}(e^{-4au}-1)}{(1+e^{-4au})^2} .
$$
Since $f>0$, we see that $a=(\sqrt{13}-1)/6$, and, from $f'(u)=X_1(f)=|\nabla f|>0$, it follows that $u$ must be negative.

In the second case, a simple computation shows that equation \eqref{g} has the implicit solution
\begin{equation}\label{gfinal}
(y(g-a_1))^{6a_2}=c(y(g-a_2))^{6a_1},
\end{equation}
where $c\in\mathbb{R}$ is a positive constant. Hence, we have 
$$
(f-a_1\sin\theta)^{6a_2}=c(f-a_2\sin\theta)^{6a_1},
$$
which is just equation \eqref{thetanotnice}.
\end{proof}

\begin{remark} In the case when $\theta=\theta(u)$ is given by \eqref{thetanice}, one can compute the explicit expression of the function $\Psi=\Psi(u)$ as 
$$
\Psi(u)=\int_{u_0}^u\cos\theta(s)ds=\frac{1}{2a}\ln\left(e^{-4au}+1\right)+u+c_0,
$$
with $a=(-1+\sqrt{13})/6$, $c_0=\cst\in\mathbb{R}$.

In this case, we can also compute, using equation \eqref{K}, the Gaussian curvature of the surface
$$
K=-\cos^2\theta-2f\sin\theta=-\frac{4ae^{-4au}+(1-e^{-4au})^2}{(1+e^{-4au})^2}<0.
$$
\end{remark}

\begin{remark}\label{onlysol} The only possible solution of the biconservative equation \eqref{bi1} of the form  $f=a\sin\theta$, where $a\in\mathbb{R}$ is a constant, is the one with $a=(-1+\sqrt{13})/6$.  Otherwise, equation \eqref{thetanotnice} implies that $\theta$ is a constant and therefore $f=0$.
\end{remark}

\begin{theorem} Any biharmonic surface in $\sol$ is minimal.
\end{theorem}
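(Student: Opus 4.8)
The idea is to feed the first equation of Theorem~\ref{decomposition} (which a biconservative surface need not satisfy, but a biharmonic one does) into the classification of Theorem~\ref{main1}. So let $\Sigma^2\to\sol$ be biharmonic and suppose, for contradiction, that it is not minimal. By Corollary~\ref{c:bicons} it is biconservative, hence not CMC by Proposition~\ref{p:biconscmc}; thus there is an open set $U$ with $f>0$, $\nabla f\neq0$, and all of the analysis preceding Theorem~\ref{main1} is available. By Lemma~\ref{beta} we may take $\beta=0$ (the case $\beta=\pi/2$ being analogous), so $X_2=E_1$, and in the coordinates $(u,v)$ of the proof of Theorem~\ref{main1} we have $X_1=\partial_u$, $f=f(u)$, $\theta=\theta(u)$, $\theta'=-2f$, $X_1(f)=f'=|\nabla f|>0$; by Lemmas~\ref{lemma}, \ref{beta} and~\ref{theta}, $\nabla_{X_1}X_1=0$, $\nabla_{X_2}X_1=\cos\theta\,X_2$ (so $\nabla_{X_2}X_2=-\cos\theta\,X_1$), $\lambda_2=-\sin\theta$ and hence $\lambda_1=2f+\sin\theta$; finally $\theta$ is non-constant and is given either by \eqref{thetanice} or by a solution of \eqref{thetanotnice}.

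Next I would evaluate the first equation of Theorem~\ref{decomposition} along $U$. From \eqref{eq:barR} together with the identity $E_3=\cos\theta\,X_1+\sin\theta\,\xi$ contained in \eqref{eq:X}, a short computation gives $\langle\trace(\bar R(\cdot,\xi)\cdot),\xi\rangle=2\sin^2\theta$; from the principal curvatures above, $|A|^2=\lambda_1^2+\lambda_2^2=4f^2+4f\sin\theta+2\sin^2\theta$; and since $X_2(f)=0$, $\nabla_{X_1}X_1=0$ and $\nabla_{X_2}X_2=-\cos\theta\,X_1$, one has $\Delta f=f''+\cos\theta\,f'$. Therefore the first biharmonic equation reduces to the ODE
$$
f''+\cos\theta\,f'-4f^3-4f^2\sin\theta-4f\sin^2\theta=0,
$$
to be read together with the biconservative equation \eqref{bi1} and $\theta'=-2f$.

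It then remains to contradict this ODE in each of the two cases of Theorem~\ref{main1}. In the case \eqref{thetanice}, $f=a\sin\theta$ with $a=(-1+\sqrt{13})/6$; substituting, dividing by $\sin\theta$, and expressing $\cos2\theta$ and $\cos^2\theta$ through $\sin^2\theta$ turns the ODE into $(4a^3-2a^2)+(-12a^3-2a^2-4a)\sin^2\theta=0$, and since $\sin^2\theta$ is non-constant both coefficients must vanish, which is impossible for this value of $a$ (already $-12a^3-2a^2-4a=0$ forces $a=0$). In the case \eqref{thetanotnice} I would set $f=\sin\theta\,g(\sin\theta)$, as in the proof of Theorem~\ref{main1}, and then differentiate \eqref{bi1} once, eliminate $f''$ between it and the ODE above, insert $f=\sin\theta\,g$ together with the value of $f'$ coming from \eqref{bi1}, and cancel a power of $\sin\theta$; one is left with an algebraic relation $A_0(g)+A_1(g)\sin^2\theta=0$ for explicit polynomials $A_0,A_1$, that is, $\sin^2\theta$ is a fixed non-constant rational function of $g$ on a suitable open subset. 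Substituting $\sin\theta$ (expressed this way through $g$) back into \eqref{thetanotnice}, equivalently into \eqref{gfinal}, would then force that rational function to be a product of non-trivial powers of $g-a_1$ and $g-a_2$ with the irrational exponents $a_{1,2}=(-1\pm\sqrt{13})/6$ appearing — which no rational function can be. This contradiction shows $\Sigma^2$ is minimal.

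The computations of $\langle\trace(\bar R(\cdot,\xi)\cdot),\xi\rangle$, $|A|^2$ and $\Delta f$, and the substitution in case \eqref{thetanice}, are routine. I expect the case \eqref{thetanotnice} to be the main obstacle: carrying out the elimination of $f'$ and $f''$ between the biharmonic ODE and \eqref{bi1} cleanly, and then organizing the resulting identity so that the classification relation \eqref{thetanotnice} (equivalently \eqref{gfinal}) can be fed back in to produce the final contradiction.
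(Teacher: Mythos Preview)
Your strategy coincides with the paper's: reduce to the normal part of the biharmonic equation on $U$, compute $|A|^2=4f^2+4f\sin\theta+2\sin^2\theta$, $\langle\trace(\bar R(\cdot,\xi)\cdot),\xi\rangle=2\sin^2\theta$ and $\Delta f=f''+\cos\theta\,f'$, and then rule out each branch of Theorem~\ref{main1}. The differences are only in the endgames. For \eqref{thetanice}, the paper computes $\Delta f$ explicitly from $f=a\sin\theta$ and checks that $\Delta f<0$, contradicting \eqref{bihbicons}; your coefficient-matching argument (forcing $4a^3-2a^2=0$ and $-12a^3-2a^2-4a=0$ simultaneously) is a clean alternative and is correct. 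For \eqref{thetanotnice}, the paper carries out exactly the elimination you outline and lands on $\sin^2\theta\,P_1(g)=P_2(g)$ with explicit polynomials $P_1$ (degree $5$) and $P_2$ (degree $3$); it then substitutes the expression of $\sin^2\theta$ coming from \eqref{gfinal}, differentiates once in $g$, and eliminates both $\sin^2\theta$ and the constant $c$ between the two relations, obtaining a single degree-$8$ polynomial in $g$ with nonzero leading term, hence $g$ constant---a contradiction. Your proposed endgame (a nonzero rational function of $g$ cannot equal $(g-a_2)^{\pm b_1}(g-a_1)^{\mp b_2}$ with the irrational exponents $b_{1,2}=6a_{1,2}/\sqrt{13}$) is also valid and conceptually cleaner; to make it rigorous you should take logarithmic derivatives, note that $R'/R$ has only integer residues while the right-hand side has residues $\pm b_1,\pm b_2\notin\mathbb{Z}$, and check separately the degenerate case $A_1\equiv 0$ (then $A_0(g)=0$ forces $g$ constant) so that $R=-A_0/A_1$ is a genuine nonzero rational function. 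Either route closes the argument.
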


\begin{proof} The case of CMC biharmonic surfaces was studied in \cite{LO} so we only have to show that if $\Sigma^2$ is a biharmonic surface in $\sol$ such that there exists a point $p\in\Sigma^2$ at which $\nabla f\neq 0$, then the surface is minimal.

As before, we consider an open neighborhood $U\subset\Sigma^2$ of the point $p$ such that $\nabla f\neq 0$ and $f>0$ on $U$. As in the proof of Theorem \ref{main1}, suffices treating only the case of a surface given by \eqref{t1}, the other one being similar. We will also use the same setting and notations as in the proof of Theorem \ref{main1}.

From Theorem \ref{decomposition}, we know that the normal part of the biharmonic equation is
$$
\Delta f-f|A|^2-f\langle\trace(\bar R(\cdot,\xi)\cdot),\xi\rangle=0.
$$
A direct computation, using Lemma \ref{theta}, shows that
$$
|A|^2=4f^2+4f\sin\theta+2\sin^2\theta.
$$
From formula \eqref{eq:barR} we find
$$
\langle\trace(\bar R(\cdot,\xi)\cdot),\xi\rangle=2\sin^2\theta.
$$
It follows that the biconservative surface $\Sigma^2$ given by \eqref{t1} is biharmonic if and only if
\begin{equation}\label{bihbicons}
\Delta f=4f(f^2+f\sin\theta+\sin^2\theta)>0.
\end{equation}

Now, let us assume that $\theta$ is given by \eqref{thetanice} and, therefore, 
$$
f(u)=\frac{2ae^{-2au}}{1+e^{-4au}},\quad a=\frac{-1+\sqrt{13}}{6}.
$$ 
From Lemma \ref{theta}, we have that
$$
\Delta f=f''+\cos\theta f',
$$
and then, a direct computation leads to 
$$
\Delta f=\frac{4e^{-6au}\left(e^{-4au}(2a^3-a^2)+e^{4au}(2a^3-a^2)+2a^2-12a^3\right)}{\left(1+e^{-4au}\right)^3}.
$$
It is easy to verify that $\Delta f<0$, in this case, which is a contradiction.

Next, let the function $\theta$ be a solution of equation \eqref{thetanotnice} and again denote $g=f/\sin\theta$, as in the proof of Theorem \ref{main1}. From Remark \ref{onlysol}, we have that $g\neq 0$ throughout $U$. 

The first derivative of $f$ is given by the biconservative equation as
$$
f'=-\frac{f\sin(2\theta)}{3f+\sin\theta},
$$ 
where we have also used $\theta'=-2f$. Taking into account Remark \ref{onlysol} (and possibly restricting to an open subset $V\subset U$) we can see that $f'$ is well defined. Using this formula, and again $\theta'=-2f$, it is straightforward to compute $f''$ and then
\begin{eqnarray*}
\Delta f&=&\frac{f}{(3f+\sin\theta)^3}\Big(36f^3(1-2\sin^2\theta)-f^2(6\sin\theta+18\sin^3\theta)\\&&-f(12\sin^2\theta-8\sin^4\theta)+2\sin^3\theta-2\sin^5\theta\Big).
\end{eqnarray*}
Replacing in \eqref{bihbicons}, and taking into account that $\sin\theta\neq 0$ on $U$, one obtains the following equation
\begin{equation}\label{pol}
\sin^2\theta P_1(g)-P_2(g)=0,
\end{equation}
where $P_1$ and $P_2$ are two polynomial functions of $u$, given by
$$
P_1(g)=100g^5+216g^4+324g^3+166g^2+32g+6
$$
and 
$$
P_2(g)=36g^3-6g^2-12g+2.
$$

Since $g$ satisfies equation \eqref{gfinal}, we can write
$$
\sin^2\theta=\frac{(g-a_2)^{b_1}}{c(g-a_1)^{b_2}},
$$
where $b_{1,2}=(6a_{1,2})/\sqrt{13}$ and $c\in\mathbb{R}$ is a positive constant. Then, equation \eqref{pol} can be written as
$$
(g-a_2)^{b_1}P_1(g)=c(g-a_1)^{b_2}P_2(g).
$$
Differentiating with respect to $u$ and using that $g'(u)\neq 0$ at any point of $U$, we obtain
$$
(g-a_2)^{b_1}(g-a_1)\left(b_1P_1+(g-a_2)\frac{dP_1}{dg}\right)=c(g-a_1)^{b_1}(g-a_2)\left(b_2P_2+(g-a_1)\frac{dP_2}{dg}\right)
$$

The last two equations imply 
$$
2(3g+1)P_1P_2+(3g^2+g-1)\left(P_1\frac{dP_2}{dg}-P_2\frac{dP_1}{dg}\right)=0.
$$
After a long but simple computation, this polynomial equation reads as
$$
25128g^8+92760g^7+85632g^6+15840g^5-19352g^4-13224g^3-1872g^2+656g+160=0,
$$
and shows that $g$ is a constant, which is a contradiction. We conclude that there are no biharmonic surfaces in $\sol$ other than the minimal ones.
\end{proof}

\begin{remark} This last result is similar to that in \cite{Ishikawa,D,J2} on biharmonic surfaces of the Euclidean space $\mathbb{E}^3$ and even to the one in \cite{CMO} about biharmonic surfaces in the Euclidean sphere $\mathbb{S}^3$. Although in this last case non-minimal biharmonic surfaces do exist, they are also CMC, and, therefore, there are no non-CMC such surfaces in $\mathbb{S}^3$.
\end{remark}

\end{document}